\documentclass[11pt]{article}

\usepackage{bbm}
\usepackage[T1]{fontenc}
\usepackage[utf8]{inputenc}
\usepackage[english]{babel}

\usepackage{amsmath,amssymb,amsthm,mathtools}
\usepackage{enumitem}
\usepackage[margin=1in]{geometry}
\usepackage{hyperref}

\theoremstyle{definition}
\newtheorem{definition}{Definition}[section]
\newtheorem{remark}[definition]{Remark}
\newtheorem{notation}[definition]{Notation}

\theoremstyle{plain}
\newtheorem{theorem}[definition]{Theorem}
\newtheorem{proposition}[definition]{Proposition}
\newtheorem{lemma}[definition]{Lemma}
\newtheorem{corollary}[definition]{Corollary}

\numberwithin{equation}{section}

\title{Rigid many-one degrees contain infinite antichains of $1$-degrees}

\author{
Patrizio Cintioli\\
Mathematics Division, School of Science and Technology\\
University of Camerino, Italy\\
\texttt{patrizio.cintioli@unicam.it}
}

\date{} 

\begin{document}

\maketitle

\begin{abstract}
Odifreddi asked whether every non-irreducible many-one degree must contain an infinite antichain of
one-one degrees. Positive answers are known for computably enumerable many-one degrees (Degtev) and,
more recently, for many-one degrees admitting a $\Delta^0_2$ representative (Batyrshin). In this note we
isolate a rigidity principle behind these phenomena.

Call a set $A\subseteq\omega$ \emph{$m$-rigid} if every total computable $m$-autoreduction of $A$ is eventually
the identity. We prove that if $A$ is $m$-rigid, then its many-one degree $\deg_m(A)$ contains an infinite
antichain of $1$-degrees. The proof uses a uniform duplication construction: for each computable parameter
$S$ we define $B_S\equiv_m A$ so that any injective reduction $B_S\le_1 B_T$ induces an $m$-autoreduction of
$A$ and therefore forces $S\subseteq^{*}T$. Choosing an almost-inclusion infinite antichain of computable sets yields
the desired infinite $1$-antichain inside $\deg_m(A)$.

As applications, Jockusch's rigidity theorem implies that every $1$-generic set is $m$-rigid, giving a
comeager family of positive instances. Moreover, $m$-rigidity holds with Lebesgue measure $1$ (indeed,
every Martin-L\"of random real is $m$-rigid). Consequently, Odifreddi's Question~5 has a positive answer
\emph{with probability $1$} for a fair-coin random $A\in 2^\omega$; any counterexample (if it exists) is
confined to a null set (and, by genericity, also to a meager set).
\end{abstract}
\medskip

\noindent\textbf{2020 Mathematics Subject Classification.}
Primary 03D30; Secondary 03D32.

\medskip

\noindent\textbf{Keywords.}
Many-one degrees; 1-degrees; Martin-L\"of randomness; 1-generic sets.
\section{Introduction}

Many-one reducibility $\le_m$ and one-one reducibility $\le_1$ induce the corresponding degree structures of
$m$-degrees and $1$-degrees. Every $m$-degree may split into many $1$-degrees, and a basic structural problem
is to understand how complicated this splitting can be.

\medskip
\noindent
\textbf{Odifreddi's Question~5.}
Odifreddi asked whether every many-one degree containing more than one one-one degree must contain an
\emph{infinite antichain} of one-one degrees \cite{Odifreddi1981,Odifreddi1999}.
Equivalently: if an $m$-degree is
not irreducible, must it contain infinitely many pairwise $\le_1$-incomparable representatives?
\smallskip

\noindent\emph{Note.} We tacitly restrict to \emph{noncomputable} $m$-degrees, since the nontrivial computable
$m$-degree is non-irreducible but contains no infinite $1$-antichains.

\paragraph{Previous positive results.}
The question remains open in full generality, but it has been answered positively in several important settings.
In the computably enumerable case, Degtev proved a trichotomy: every c.e.\ many-one degree is either recursive,
or consists of a single one-one degree, or contains an infinite antichain of one-one degrees \cite{Degtev1973}.
Degtev later refined the picture by constructing, for each $n$, a c.e.\ many-one degree with a least one-one degree
and exactly $n$ minimal one-one degrees above it \cite{Degtev1976}.
More recently, Batyrshin extended the infinite-antichain phenomenon beyond the c.e.\ case by showing that
every $\Delta^0_2$ many-one degree that contains more than one one-one degree already contains an infinite antichain
of one-one degrees \cite{Batyrshin2021}.

\paragraph{A rigidity principle.}
In this note we isolate a simple structural property that, by itself, forces the existence of infinite one-one
antichains inside a fixed many-one degree.
We call a set $A\subseteq\omega$ \emph{$m$-rigid} if every total computable $m$-autoreduction of $A$ is eventually the
identity. Our main theorem shows that $m$-rigidity already implies strong $1$-degree complexity:

\smallskip
\noindent
\emph{If $A$ is $m$-rigid, then the many-one degree $\deg_m(A)$ contains an infinite antichain of $1$-degrees.}

\smallskip
\noindent
The proof is based on a uniform ``duplication'' construction: from each computable parameter $S$ we build a set
$B_S\equiv_m A$ such that any injective reduction $B_S\le_1 B_T$ induces an $m$-autoreduction of $A$, hence forces
$S\subseteq^{*}T$. Choosing an almost-inclusion infinite antichain of computable sets yields the desired $1$-antichain in
$\deg_m(A)$.

\paragraph{Typicality: category and measure.}
The rigidity viewpoint immediately connects Odifreddi's question with two standard notions of typicality on Cantor
space $2^\omega$. Beyond Baire category, the rigidity viewpoint also yields a measure-theoretic strengthening.
On the one hand, by a classical theorem of Jockusch, every $1$-generic set is $m$-rigid
\cite{Jockusch1980}; since $1$-generics form a comeager set, this yields a comeager family of positive instances.
On the other hand, we show that every Martin-L\"of random set is $m$-rigid; since Martin-L\"of randoms form a measure $1$ class, this yields the following almost sure consequence: Odifreddi's Question~5 holds \emph{with probability $1$} for a fair-coin random $A\in 2^\omega$. In particular, any counterexample (if it exists) must lie in a null set (and also in a meager set).

\paragraph{Outline.}
Section~\ref{sec:rigid-trap} proves the rigidity trap theorem for arbitrary $m$-rigid sets.
Section~\ref{sec:typicality} establishes $m$-rigidity for $1$-generic and Martin-L\"of random sets and derives the
almost-everywhere consequences.

\section{Preliminaries}

\noindent\textbf{Conventions.}
Our notation and basic background from computability theory, degree structures,
and algorithmic randomness are standard. For general references see, e.g.,
Odifreddi~\cite{OdifreddiCRT} and Soare~\cite{Soare1987}, and for algorithmic randomness
(in particular Martin-L\"of tests and Martin-L\"of randomness) see
Downey--Hirschfeldt~\cite{DH2010} and Nies~\cite{Nies2009}.
We write $\omega=\{0,1,2,\dots\}$ and identify each set $A\subseteq\omega$ with its characteristic function
in Cantor space $2^\omega$. Thus we write $A(n)=1$ iff $n\in A$, and $A(n)=0$ otherwise.
Finite binary strings are elements of $2^{<\omega}$. For $\sigma,\tau\in 2^{<\omega}$ we write
$\sigma\preceq\tau$ if $\sigma$ is an initial segment of $\tau$, and for $X\in 2^\omega$ we write
$\sigma\prec X$ if $\sigma$ is an initial segment of $X$.

For $\sigma\in 2^{<\omega}$ let
\[
[\sigma] := \{X\in 2^\omega : \sigma\prec X\}
\]
be the basic clopen (cylinder) set determined by $\sigma$. For $W\subseteq 2^{<\omega}$ set
\[
[W] := \bigcup_{\sigma\in W}[\sigma],
\]
the open set generated by $W$ (effectively open if $W$ is c.e.). A set $D\subseteq 2^{<\omega}$ is \emph{dense} if for every
$\tau\in 2^{<\omega}$ there exists $\sigma\succeq\tau$ with $\sigma\in D$ (equivalently, $[D]$ is dense open).

We write $\mathrm{REC}$ for the class of computable subsets of $\omega$.

\begin{definition}[Many-one and one-one reducibility]
For sets $A,B\subseteq\omega$:
\begin{itemize}[leftmargin=2em]
\item $A\le_1 B$ if there exists a total computable \emph{injective} function $f:\omega\to\omega$ such that
$\forall x\,\bigl(x\in A \leftrightarrow f(x)\in B\bigr)$.
\item $A\le_m B$ if there exists a total computable function $f:\omega\to\omega$ such that
$\forall x\,\bigl(x\in A \leftrightarrow f(x)\in B\bigr)$.
\end{itemize}
\end{definition}

\begin{definition}[Degrees]
Define $A\equiv_1 B$ iff $A\le_1 B$ and $B\le_1 A$; similarly $A\equiv_m B$ iff $A\le_m B$ and $B\le_m A$.
The $1$-degree and $m$-degree of $A$ are
\[
\deg_1(A):=\{X\subseteq\omega : X\equiv_1 A\},\qquad
\deg_m(A):=\{X\subseteq\omega : X\equiv_m A\}.
\]
An $m$-degree is \emph{irreducible} if it contains exactly one $1$-degree.
\end{definition}

\begin{definition}[$m$-autoreductions and $m$-rigidity]
Let $A\subseteq\omega$. A total computable function $k:\omega\to\omega$ is an \emph{$m$-autoreduction of $A$} if
\[
(\forall x)\ \bigl[x\in A \iff k(x)\in A\bigr].
\]
We say that $A$ is \emph{$m$-rigid} if every $m$-autoreduction $k$ of $A$ is eventually the identity, i.e.\
$k(x)=x$ for all but finitely many $x$.
\end{definition}
\smallskip
\noindent\emph{Note.} Every $m$-rigid set is necessarily infinite and coinfinite.

\begin{lemma}
\label{lem:mrigid-coinfinite}
If $A$ is $m$-rigid, then $A$ is bi-immune.
\end{lemma}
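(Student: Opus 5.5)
We argue by contraposition: assuming $A$ is not bi-immune, we build a total computable $m$-autoreduction $k$ of $A$ with $k(x)\neq x$ for infinitely many $x$, contradicting $m$-rigidity. Not bi-immune means that either $A$ or $\overline{A}$ contains an infinite computable subset. Since the definition of $m$-autoreduction is symmetric under complementation ($x\in A\iff k(x)\in A$ is equivalent to $x\in\overline{A}\iff k(x)\in\overline{A}$), $A$ and $\overline{A}$ have exactly the same $m$-autoreductions. Hence $A$ is $m$-rigid iff $\overline{A}$ is, and it suffices to treat the case where $A$ itself contains an infinite computable set $R$.

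Given such an $R$, choose, by effectively enumerating $R$ in increasing order $r_0<r_1<r_2<\cdots$, the infinite computable subset $R'=\{r_{2i}:i\in\omega\}$. Define
\[
k(x)=\begin{cases} r_{2i+1} & \text{if } x=r_{2i},\\ x & \text{otherwise.}\end{cases}
\]
This $k$ is total and computable because $R$ is computable and its increasing enumeration is computable. It is an $m$-autoreduction: if $x\notin R'$ then $k(x)=x$ and the equivalence is trivial; if $x=r_{2i}\in R'$ then both $x$ and $k(x)=r_{2i+1}$ lie in $R\subseteq A$, so $x\in A\iff k(x)\in A$ holds. But $k(x)\neq x$ for every $x\in R'$, an infinite set, so $k$ is not eventually the identity. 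Thus $A$ is not $m$-rigid, a contradiction. (One could even take the simpler $k(x)=r_0$ for $x\in R\setminus\{r_0\}$; any map sending an infinite computable piece of $A$ into $A$ non-identically works.)

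There is no real obstacle. The only point needing care is the reduction from $\overline{A}$ to $A$ via the complement symmetry of autoreductions, together with checking that $k$ is computable, which uses that $R$ is computable rather than merely c.e. In fact, an infinite c.e. subset already contains an infinite computable subset, so the argument also covers the immunity formulation with c.e. sets.
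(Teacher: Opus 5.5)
Your proof is correct and follows essentially the same route as the paper: build a computable non-identity map that moves points of an infinite computable subset of $A$ to other points of that subset (the paper sends each element of $S$ to the next element of $S$; you send $r_{2i}$ to $r_{2i+1}$), and use the fact that $A$ and $\overline{A}$ have the same $m$-autoreductions. Your explicit remark that every infinite c.e.\ set contains an infinite computable subset fills in the step from ``no infinite computable subset'' to immunity, which the paper leaves implicit.
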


\begin{proof}
If $A$ had an infinite computable subset $S\subseteq A$, define $k(x)$ to be the next element of $S$ above $x$
when $x\in S$, and $k(x)=x$ otherwise. Then $k$ is a total computable $m$-autoreduction of $A$ with
$k(x)\neq x$ for infinitely many $x\in S$, contradicting $m$-rigidity. Hence $A$ has no infinite computable
subset. 
Note that $k^{-1}(A)=A\Leftrightarrow k^{-1}(\overline{A})=\overline{A}$, hence $\overline{A}$ is $m$-rigid whenever $A$ is.
Applying the same argument to $\overline{A}$ shows that $\overline{A}$ has no infinite computable subset either, so $A$ is bi-immune.
\end{proof}

\begin{remark}
Lemma \ref{lem:mrigid-coinfinite} shows that every $m$-rigid set is bi-immune.
Consequently, no computably enumerable set can be $m$-rigid. 
This implies that the rigidity principle isolated here is fundamentally orthogonal to the mechanism driving infinite antichains in the c.e. degrees (Degtev's theorem).
\end{remark}

\begin{notation}
For $S,T\subseteq\omega$ we write $S \subseteq^* T$ if $S\setminus T$ is finite.
Equivalently, $S \subseteq^* T$ means that $x\in S \Rightarrow x\in T$ for all but finitely many $x$.

\end{notation}

\begin{remark}[Many-one degrees refine Turing degrees]
If $A\le_m B$ via a total computable function $f$, then $A\le_T B$ (compute $f(x)$ and query $B$). Hence
$A\equiv_m B$ implies $A\equiv_T B$, so each $m$-degree is contained in a single Turing degree.

In particular, in the duplication construction of Theorem~\ref{thm:mrigid-antichain} we always have
$B_S\equiv_m A$, and therefore $B_S\equiv_T A$ for every computable parameter $S$.
Thus our infinite $\le_1$-antichains are produced \emph{within the same Turing degree} as $A$, and they do not
lower the arithmetical/hyperarithmetical complexity of the base real: membership in any class closed under
$\equiv_T$ (e.g.\ $\Delta^0_n$ or $\Delta^1_1$) is uniform on $\deg_m(A)$.
Equivalently, if $A$ is not hyperarithmetical, then $\deg_m(A)$ contains no hyperarithmetical representative.
This is the reason why the typicality results in Section~\ref{sec:typicality} can be interpreted directly at the
level of $m$-degrees.
\end{remark}

\begin{notation}[Lebesgue measure on $2^\omega$]
We write $\mu$ for the usual fair-coin product measure on $2^\omega$, determined by
$\mu([\sigma])=2^{-|\sigma|}$ for every $\sigma\in 2^{<\omega}$.
\end{notation}

\begin{definition}[Martin-L\"of randomness]
A \emph{Martin-L\"of test} is a uniformly c.e.\ sequence $(U_n)_{n\in\omega}$ of open subsets of $2^\omega$
such that $\mu(U_n)\le 2^{-n}$ for all $n$.
A set $X\in 2^\omega$ is \emph{Martin-L\"of random} if $X\notin\bigcap_n U_n$ for every Martin-L\"of test.
\end{definition}

\begin{definition}[$1$-genericity]
A set $A\in 2^\omega$ is \emph{$1$-generic} if for every computably enumerable set of strings $W\subseteq 2^{<\omega}$
there exists $\sigma\prec A$ such that either $\sigma\in W$, or no extension of $\sigma$ belongs to $W$.
Equivalently, $A$ meets or avoids every effectively open set $[W]$.
\end{definition}

\begin{proposition}[Topological size and Borel complexity of $1$-generics]
\label{prop:1generic-comeager-Pi02}
Let $\mathcal{G}_1\subseteq 2^\omega$ be the class of $1$-generic sets. Then $\mathcal{G}_1$ is a dense
$G_\delta$ subset of $2^\omega$. In particular, $\mathcal{G}_1$ is comeager and (in the Borel hierarchy)
a $\Pi^0_2$ set.
\end{proposition}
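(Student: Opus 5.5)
The plan is to write $\mathcal{G}_1$ explicitly as a countable intersection of open sets, one for each c.e.\ set of strings, and then to check that each of these open sets is dense. Fix an effective enumeration $(W_e)_{e\in\omega}$ of all c.e.\ subsets of $2^{<\omega}$. For each $e$ put
\[
D_e := \{\sigma\in 2^{<\omega} : \sigma\in W_e \ \text{or}\ \neg(\exists \tau\succeq\sigma)\,\tau\in W_e\},
\qquad
\mathcal{O}_e := [D_e].
\]
By the definition of $1$-genericity, $A$ is $1$-generic if and only if for every $e$ there is some $\sigma\prec A$ with $\sigma\in D_e$, that is, iff $A\in\mathcal{O}_e$ for every $e$. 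So
\[
\mathcal{G}_1=\bigcap_e \mathcal{O}_e .
\]
Each $\mathcal{O}_e$ is a union of basic clopen cylinders, hence open. This already shows that $\mathcal{G}_1$ is $G_\delta$, which in the Borel hierarchy is exactly $\Pi^0_2$.

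Next I check that each $D_e$ is dense in the sense of the Preliminaries. Let $\tau\in 2^{<\omega}$ be arbitrary. If some $\rho\succeq\tau$ lies in $W_e$, then $\rho\in D_e$ and $\rho$ extends $\tau$. Otherwise no extension of $\tau$ lies in $W_e$, so $\tau$ itself belongs to $D_e$ through the second clause. In either case $\tau$ has an extension in $D_e$, so $\mathcal{O}_e$ is dense open.

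Since $2^\omega$ is a compact metrizable space, and in particular a complete metric space, the Baire category theorem shows that a countable intersection of dense open sets is dense. Hence $\mathcal{G}_1$ is a dense $G_\delta$ set. A set containing a dense $G_\delta$ is comeager, because its complement is contained in $\bigcup_e (2^\omega\setminus\mathcal{O}_e)$, and each $2^\omega\setminus\mathcal{O}_e$ is closed with empty interior, hence nowhere dense.

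None of these steps is a real obstacle. The one point that needs care is that $\mathcal{O}_e$ has to be open even though $D_e$ is in general not c.e.: its second clause is co-c.e. This causes no difficulty, because openness only requires $\mathcal{O}_e$ to be a union of cylinders, not an effectively open set. Density also needs no effectivity, since it follows directly from the case split on $\tau$ above.
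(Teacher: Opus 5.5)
Your proof is correct and follows essentially the same route as the paper. Your $\mathcal{O}_e=[D_e]$ is exactly the paper's $\mathcal{D}_e$ (the reals that meet or avoid $W_e$), shown open and dense by the same case split, so $\mathcal{G}_1=\bigcap_e\mathcal{O}_e$ is $G_\delta$ (i.e.\ $\Pi^0_2$); the only difference is that you explicitly invoke the Baire category theorem for density of the intersection and spell out why the set is comeager, steps the paper leaves implicit.
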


\begin{proof}
Fix an effective enumeration $(W_e)_{e\in\omega}$ of all computably enumerable subsets of $2^{<\omega}$.
For each $e$ define
\[
\mathcal{D}_e :=
\Bigl\{X\in 2^\omega : \exists n\ \bigl(X{\upharpoonright}n\in W_e\bigr)\Bigr\}
\ \cup\
\Bigl\{X\in 2^\omega : \exists n\ \forall \tau\succeq X{\upharpoonright}n\ (\tau\notin W_e)\Bigr\}.
\]
Then $X\in\mathcal{D}_e$ iff $X$ meets or avoids $W_e$, hence
\[
\mathcal{G}_1=\bigcap_{e\in\omega}\mathcal{D}_e.
\]
Each $\mathcal{D}_e$ is open: if $X\in\mathcal{D}_e$ and $n$ witnesses membership, then the cylinder
$[X{\upharpoonright}n]$ is contained in $\mathcal{D}_e$.
Moreover, $\mathcal{D}_e$ is dense: given any $\sigma\in 2^{<\omega}$, either some extension of $\sigma$
lies in $W_e$ (hence $[\sigma]$ meets the first part), or no extension of $\sigma$ lies in $W_e$
(hence $[\sigma]$ is contained in the second part).
Therefore $\mathcal{G}_1$ is a countable intersection of dense open sets, i.e.\ a dense $G_\delta$ set.
\end{proof}

\begin{remark}
Here $\Pi^0_2$ refers to the \emph{Borel} hierarchy on $2^\omega$ (so $\Pi^0_2 = G_\delta$), not to the
arithmetical hierarchy of individual reals (e.g.\ $\Delta^0_2$).
\end{remark}

\section{The rigid trap}
\label{sec:rigid-trap}

In this section we prove the main structural result of the paper: $m$-rigidity alone forces infinite
antichains of $1$-degrees inside a fixed many-one degree. The proof is a uniform duplication argument
parametrized by computable sets $S\in\mathrm{REC}$.


\begin{theorem}[Rigidity trap]
\label{thm:mrigid-antichain}
If $A$ is $m$-rigid, then $\deg_m(A)$ contains an infinite antichain of $1$-degrees.
\end{theorem}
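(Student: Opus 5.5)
The plan is a duplication construction in which every element of a computable parameter set $S$ is represented twice and every other element once. An injective reduction between two such sets, composed with the projection to first coordinates, yields $m$-autoreductions of $A$. Rigidity then forces the reduction to send each copy of $x$ to a copy of $x$, which requires two copies of $x$ in the target.

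\emph{Construction.} For $S\in\mathrm{REC}$, let
\[
D_S=\{(x,0):x\in\omega\}\cup\{(x,1):x\in S\},
\]
a computable infinite set of pairs. Fix a computable bijection $c_S:\omega\to D_S$, uniformly in an index for $S$. Define
\[
B_S=\{n : c_S(n)=(x,i) \text{ with } x\in A\}.
\]
Then $A\le_m B_S$ via $x\mapsto c_S^{-1}(x,0)$, and $B_S\le_m A$ via $n\mapsto$ the first coordinate of $c_S(n)$. Hence $B_S\equiv_m A$. No ``junk'' elements outside $A$'s coordinates are introduced, so every element of the domain of $B_S$ is a genuine copy of some $x$.

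\emph{Key lemma: if $B_S\le_1 B_T$, then $S\subseteq^* T$.} Let $f$ be injective and total computable with $n\in B_S\iff f(n)\in B_T$, and write $\pi_T(m)$ for the first coordinate of $c_T(m)$. Define
\[
k_0(x)=\pi_T\bigl(f(c_S^{-1}(x,0))\bigr),
\]
and define $k_1(x)=\pi_T\bigl(f(c_S^{-1}(x,1))\bigr)$ for $x\in S$ and $k_1(x)=x$ for $x\notin S$. Both maps are total computable because $S$ is computable. Unwinding the definitions gives $x\in A\iff k_i(x)\in A$, so $k_0$ and $k_1$ are $m$-autoreductions of $A$. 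By $m$-rigidity, both are eventually the identity.

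Hence for all but finitely many $x\in S$, the two distinct points $c_S^{-1}(x,0)$ and $c_S^{-1}(x,1)$ are sent by $f$ to points of $D_T$ with first coordinate $x$. By injectivity these images are distinct. Since $D_T$ contains two pairs with first coordinate $x$ only when $x\in T$, it follows that $x\in T$, which gives $S\subseteq^*T$. This step is where injectivity is genuinely used; with a mere $m$-reduction both copies could collapse onto one.

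\emph{Antichain.} Take $S_n=\{\langle n,m\rangle : m\in\omega\}$, the computable, infinite, pairwise disjoint columns. For $n\ne n'$, $S_n\setminus S_{n'}=S_n$ is infinite, so $S_n\not\subseteq^* S_{n'}$. By the key lemma, $B_{S_n}\not\le_1 B_{S_{n'}}$. Therefore the sets $B_{S_n}$ all lie in $\deg_m(A)$ and are pairwise $1$-incomparable, so $\deg_m(A)$ contains an infinite antichain of $1$-degrees.

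The only delicate point is checking that $k_1$ is a legitimate total autoreduction. This works because $S$ is computable, so we can patch $k_1$ by the identity off $S$. Everything else is routine bookkeeping with the bijections $c_S$.
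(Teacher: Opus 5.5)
Your proof is correct and follows the paper's argument: the same duplication of the $S$-coordinates, the same pair of autoreductions $k_0,k_1$ (with $k_1$ patched by the identity off $S$), the same fiber-counting use of injectivity, and the same kind of parameters (pairwise disjoint infinite computable sets). The one difference is cosmetic. The paper keeps every odd slot and sends $2x+1$ for $x\notin S$ to a fixed $c\notin A$, which forces the exception $x\neq c$. Your re-indexing through a bijection onto $D_S$ removes those slots, so you need no such point.
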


\begin{proof}
Fix $A$ $m$-rigid. By Lemma~\ref{lem:mrigid-coinfinite}, choose $c\in\omega$ with $c\notin A$.

For each computable set $S\subseteq\omega$, define the total computable function $f_S:\omega\to\omega$ by
\[
f_S(z)=
\begin{cases}
x & \text{if } z=2x,\\
x & \text{if } z=2x+1 \text{ and } x\in S,\\
c & \text{if } z=2x+1 \text{ and } x\notin S,
\end{cases}
\]
and let $B_S:=f_S^{-1}(A)$. Then $B_S\le_m A$ via $f_S$. Conversely, since $f_S(2x)=x$ for all $x$,
we have $x\in A \iff 2x\in B_S$, so $A\le_m B_S$ via $x\mapsto 2x$. Hence $B_S\equiv_m A$.
\medskip

\noindent\textbf{Key claim.} If $B_S\le_1 B_T$, then $S\subseteq^{*}T$.

\smallskip
Assume $B_S\le_1 B_T$ via a total computable injective $h$. Define
\[
k_0(x):= f_T(h(2x)),\qquad
k_1(x):=
\begin{cases}
f_T(h(2x+1)) & \text{if } x\in S,\\
x & \text{if } x\notin S.
\end{cases}
\]
Since $S$ is a computable set, $k_1$ is a total computable function.
We claim that $k_0$ is an $m$-autoreduction of $A$. Indeed, for every $x$,
\[
x\in A \iff 2x\in B_S \iff h(2x)\in B_T \iff f_T(h(2x))\in A \iff k_0(x)\in A.
\]
Similarly, for $k_1$, if $x\notin S$ then $k_1(x)=x$ and the equivalence is trivial, while if $x\in S$ then
$f_S(2x+1)=x$ and
\[
x\in A \iff 2x+1\in B_S \iff h(2x+1)\in B_T \iff f_T(h(2x+1))\in A \iff k_1(x)\in A.
\]

Hence both $k_0$ and $k_1$ are $m$-autoreductions of $A$.
By $m$-rigidity, both are eventually the identity. Hence for all but finitely many $x\in S$ we have
$f_T(h(2x))=x=f_T(h(2x+1))$. Injectivity of $h$ implies the fiber $f_T^{-1}(x)$ has at least two points.
For $x\neq c$ (note that $c$ is the only value that may have a large fiber under $f_T$), the definition of $f_T$
gives $f_T^{-1}(x)=\{2x\}$ if $x\notin T$ and $f_T^{-1}(x)=\{2x,2x+1\}$ if $x\in T$; hence $|f_T^{-1}(x)|\ge 2$
implies $x\in T$.
Therefore, for all but finitely many $x\in S\setminus\{c\}$ we have $x\in T$, and thus $S\subseteq^{*}T$.

\medskip
Now choose a family $(S_i)_{i\in\omega}$ of computable sets that is pairwise $\subseteq^{*}$-incomparable, e.g.
\[
S_i:=\{2^i(2n+1):n\in\omega\}.
\]
Then $B_{S_i}\not\le_1 B_{S_j}$ for $i\neq j$, so $\{B_{S_i}:i\in\omega\}$ is an infinite $\le_1$-antichain
contained in $\deg_m(A)$.
\end{proof}

\begin{remark}[A structural reading of the duplication construction]
\label{rem:quasi-embedding}
Fix an $m$-rigid set $A$ and define $(B_S)_{S\in\mathrm{REC}}$ as in the proof of
Theorem~\ref{thm:mrigid-antichain}. The argument yields more than the existence of an antichain: it shows that the map
\[
\Phi:\mathrm{REC}\to \mathcal{P}(\omega), \qquad \Phi(S):=B_S,
\]
is \emph{order-reflecting} with respect to almost inclusion. Namely, for all computable sets $S,T$,
\[
B_S \le_1 B_T \ \Longrightarrow\ S\subseteq^{*}T.
\]
Equivalently, if $S\not\subseteq^{*}T$ then $B_S \not\le_1 B_T$.
In particular, $\subseteq^{*}$-incomparability of parameters yields $\le_1$-incomparability of the corresponding
sets, which is the mechanism behind the infinite $1$-degree antichains inside $\deg_m(A)$.
\end{remark}

\section{Typicality: category and measure}
\label{sec:typicality}

\begin{proposition}[Rigidity holds almost surely]
\label{prop:mrigid-measure1}
Let $\mathcal{R}\subseteq 2^\omega$ be the class of $m$-rigid sets. Then $\mu(\mathcal{R})=1$.
Moreover, every Martin-L\"of random set is $m$-rigid.
\end{proposition}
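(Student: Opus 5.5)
Since Martin-L\"of randoms form a class of measure $1$, the first assertion follows from the second. So the task is to show that every Martin-L\"of random $A$ is $m$-rigid. Suppose not: some total computable $k$ is an $m$-autoreduction of $A$ with $k(x)\neq x$ for infinitely many $x$. Fix this $k$. For each $m$ we will build a Martin-L\"of-type test $(U_m)$, depending only on $k$, which contains every $X$ that is $m$-autoreduced by $k$, i.e.\ every $X$ satisfying $X(x)=X(k(x))$ for all $x$ (this defines the class $P_k$), provided the set $D_k=\{x:k(x)\neq x\}$ is infinite. Then $A\in\bigcap_m U_m$, contradicting randomness. Note that $P_k$ is a $\Pi^0_1$ class, so it suffices to show that $\mu(P_k)=0$ effectively; even more simply, we will give an explicit computable cover.

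The key combinatorial step is as follows. From the infinite computable set $D_k$ we compute a sequence of \emph{disjoint} pairs $\{x_i,k(x_i)\}$ with $x_i\neq k(x_i)$. Search $D_k$ in increasing order and choose $x_i\in D_k$ such that neither $x_i$ nor $k(x_i)$ lies in the finite set $F_{i-1}$ of numbers already used. This is possible because only finitely many $x$ have $x\in F_{i-1}$. The obstacle is that infinitely many $x$ could have $k(x)\in F_{i-1}$, for instance when $k$ maps everything to one point. We handle this with a case split. If some value $v$ has an infinite fiber among $D_k$, then $X(x)=X(v)$ holds for infinitely many $x$, and these constraints are independent relative to the single bit $X(v)$. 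Otherwise each fiber meets $D_k$ only finitely, and a greedy choice avoiding $F_{i-1}$ always succeeds. The sets $\{x:k(x)\in F\}$ may not be computably finite, but since $D_k$ is infinite a search will always find a suitable $x$ as long as the finite-fiber case holds. To make the construction uniform, we avoid the case split: search for an $x\in D_k$ with $x,k(x)\notin F_{i-1}$, or, failing that, reuse a value. Concretely, $k$ is computable and total, so we search for any $x\in D_k\setminus F_{i-1}$ with $k(x)\notin F_{i-1}$. If the search never succeeds, some $v\in F_{i-1}$ has infinitely many preimages, and we are in the constant-star situation. Since nonuniformity is harmless here (a single test suffices per $k$, and we may hardcode finitely much information), we fix which case holds.

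In both cases we obtain infinitely many constraints on $X$ that are independent with respect to the fair-coin measure. In the disjoint-pairs case each constraint has probability $1/2$, giving $\mu\{X: X(x_i)=X(k(x_i)) \text{ for } i<n\}=2^{-n}$. In the star case, with $y_1,y_2,\dots$ computable distinct elements of $k^{-1}(v)\setminus\{v\}$, the constraints $X(y_i)=X(v)$ for $i<n$ also give measure $2^{-n}$. Take $U_m$ to be the clopen set of $X$ satisfying the first $m$ constraints. It is uniformly computable, has measure $2^{-m}$, and contains $P_k\ni A$. Hence $A$ is not Martin-L\"of random, a contradiction. I expect the main obstacle to be the careful extraction of disjoint pairs, namely the case analysis just described; the measure computation itself is routine.
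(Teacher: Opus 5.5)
Your proof is correct, and its overall structure matches the paper's. For a fixed $k$ that is not eventually the identity, both cover $P_k$ (the paper's $\mathrm{Fix}(k)$) by the clopen sets cut out by the first $n$ constraints $X(x_i)=X(k(x_i))$, each of measure $2^{-n}$.

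The difference is the combinatorial step you identified as the main obstacle. You require the pairs $\{x_i,k(x_i)\}$ to be pairwise disjoint, and that forces the nonuniform split into an infinite-fiber star case and a finite-fiber greedy case. The paper asks only that the new point $x_n\in D_k$ exceed $\max\{x_i,k(x_i):i<n\}$, with no condition on $k(x_n)$. Then the bit $X(x_n)$ appears in none of the earlier constraints. The earlier constraints and the bit $X(k(x_n))$ depend only on coordinates other than $x_n$, because $k(x_n)\neq x_n$. So adding $X(x_n)=X(k(x_n))$ halves the measure whether or not $k(x_n)$ was used before. In short, only the $x_i$ need to be fresh; reusing $k$-values costs nothing, since the constraint graph stays a forest. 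Your phrase ``or, failing that, reuse a value'' was reaching for exactly this. Both of your cases are instances of this one argument; the star case is the one where all $k(x_i)$ coincide.

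What the paper's choice buys is a single recipe, computable uniformly from $k$ whenever $D_k$ is infinite, with no case analysis. Your version is equally valid for the proposition, since, as you note, nonuniformity in $k$ is harmless. However, your half-uniform search paragraph is not effective as written, because the failure of a search cannot be detected; drop it in favor of the fixed case split you end up using.

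Deducing $\mu(\mathcal{R})=1$ from the fact that the Martin-L\"of randoms have measure $1$ is fine. The paper instead notes directly that the non-rigid sets form a countable union of the null sets $\mathrm{Fix}(k)$.
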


\begin{proof}
Fix a total computable function $k:\omega\to\omega$ and consider
\[
\mathrm{Fix}(k):=\{A\in 2^\omega : \forall x\ (A(x)=A(k(x)))\}.
\]
If $k$ is eventually the identity, it does not violate $m$-rigidity, so we need not consider it.
Otherwise $E:=\{x:k(x)\neq x\}$ is infinite computable.
Choose a computable increasing sequence $(x_n)_{n\in\omega}\subseteq E$ such that
$x_n>\max\{x_i,k(x_i):i<n\}$. For each $n$ let
\[
U_n:=\{A\in 2^\omega:\forall i<n\ (A(x_i)=A(k(x_i)))\}.
\]
Then $(U_n)$ is a uniformly computable sequence of clopen sets, hence a uniformly c.e.\ sequence of open sets.
Notice that $U_0 = 2^\omega$, so $\mu(U_0) = 1$. For $n>0$, $U_n$ is obtained from $U_{n-1}$ by imposing the single new constraint $A(x_{n-1})=A(k(x_{n-1}))$.
Because $x_{n-1}>\max\{x_i,k(x_i):i<n-1\}$, the coordinate $x_{n-1}$ is fresh (it does not appear in any of the constraints defining $U_{n-1}$). Furthermore, since $x_{n-1}\in E$, we have $x_{n-1}\neq k(x_{n-1})$. Therefore, conditioning on $U_{n-1}$, the value $A(x_{n-1})$ is a completely free boolean variable, and the new equality exactly halves the measure, yielding $\mu(U_n)=2^{-n}$.
Finally, $\mathrm{Fix}(k)\subseteq\bigcap_n U_n$, hence $\mu(\mathrm{Fix}(k))=0$ and $(U_n)$ is a Martin-L\"of test covering $\mathrm{Fix}(k)$.

Now, if $A$ is not $m$-rigid then $A\in \mathrm{Fix}(k)$ for some total computable $k$ that is not eventually
the identity. Since there are only countably many such $k$, the set of non-$m$-rigid reals is a countable union
of measure-zero sets, hence null. Therefore $\mu(\mathcal{R})=1$. Finally, every Martin-L\"of random set avoids
each $\mathrm{Fix}(k)$ covered by a Martin-L\"of test, so it must be $m$-rigid.
\end{proof}

\begin{corollary}[An almost sure positive answer to Odifreddi's Question~5]
\label{cor:almost-sure-odifreddi}
For $\mu$-almost every $A\in 2^\omega$, the many-one degree $\deg_m(A)$ contains an infinite antichain of
$1$-degrees. In particular, every Martin-L\"of random set $A$ has this property.
\end{corollary}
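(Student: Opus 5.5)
The corollary follows by combining Proposition~\ref{prop:mrigid-measure1} with Theorem~\ref{thm:mrigid-antichain}, so the proof will be a short chaining argument. Let $\mathcal{R}\subseteq 2^\omega$ be the class of $m$-rigid sets, and let $\mathcal{A}$ be the class of $A\in 2^\omega$ such that $\deg_m(A)$ contains an infinite antichain of $1$-degrees.

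\textbf{Step 1: $\mathcal{R}\subseteq\mathcal{A}$.} This is exactly Theorem~\ref{thm:mrigid-antichain}. For each $m$-rigid $A$, the sets $B_{S_i}$ built in its proof form the required antichain inside $\deg_m(A)$.

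\textbf{Step 2: $\mu(\mathcal{R})=1$ and every Martin-L\"of random set lies in $\mathcal{R}$.} Both facts are given by Proposition~\ref{prop:mrigid-measure1}. Combining with Step~1, every Martin-L\"of random $A$ lies in $\mathcal{A}$, which proves the ``in particular'' clause.

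\textbf{Step 3: the almost-everywhere statement.} The only point needing care is measurability, since $\mathcal{A}$ is defined by quantifying over subsets of $\omega$ and might not obviously be Borel. I avoid this by stating the result as ``$\mu$-almost every $A$'' in the completed-measure sense: $2^\omega\setminus\mathcal{A}\subseteq 2^\omega\setminus\mathcal{R}$, and the right-hand side is null. To be explicit, I will note that $2^\omega\setminus\mathcal{R}=\bigcup_k \mathrm{Fix}(k)$, where $k$ ranges over the countably many total computable functions that are not eventually the identity. Each $\mathrm{Fix}(k)$ is closed and null, so this union is a Borel null set.

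Hence the complement of $\mathcal{A}$ is contained in a Borel null set, which is exactly the meaning of the ``$\mu$-almost every'' claim. No step here is a real obstacle; the only subtlety is this measurability remark.
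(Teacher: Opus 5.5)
Your proposal is correct and follows the paper's proof: Proposition~\ref{prop:mrigid-measure1} gives that almost every $A$, and every Martin-L\"of random $A$, is $m$-rigid, and Theorem~\ref{thm:mrigid-antichain} then supplies the infinite $1$-antichain in $\deg_m(A)$. Your extra measurability remark is also correct. It observes that the exceptional set lies inside the $F_\sigma$ null set $\bigcup_k \mathrm{Fix}(k)$, which makes explicit what the paper leaves implicit.
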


\begin{proof}
By Proposition~\ref{prop:mrigid-measure1}, $\mu$-almost every $A$ is $m$-rigid (and every Martin-L\"of random $A$
is $m$-rigid). Apply Theorem~\ref{thm:mrigid-antichain}.
\end{proof}

\begin{remark}
Equivalently, the class of reals $A$ whose many-one degree fails to contain an infinite antichain of $1$-degrees
is a null set. In particular, any counterexample to Odifreddi's Question~5 (if it exists) can only be realized
by reals from a measure-zero subset of Cantor space.
\end{remark}
\begin{theorem}[Jockusch {\cite[Theorem~4.1]{Jockusch1980}}]
\label{cor:1generic-mrigid}
Every $1$-generic set is $m$-rigid.
\end{theorem}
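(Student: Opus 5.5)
Fix a $1$-generic $A$ and a total computable $k$ that is not eventually the identity; the plan is to show that $k$ is not an $m$-autoreduction of $A$. The set $E:=\{x : k(x)\neq x\}$ is then infinite and computable. Let
\[
W:=\{\sigma\in 2^{<\omega} : \exists x\in E\ \bigl(x<|\sigma|,\ k(x)<|\sigma|,\ \sigma(x)\neq\sigma(k(x))\bigr)\}.
\]
The condition on $\sigma$ is a decidable finite check, so $W$ is computable and in particular c.e. Any $X$ with $[X]$ meeting $W$, that is, with some $\sigma\prec X$ in $W$, has a point $x$ with $X(x)\neq X(k(x))$. Such an $X$ therefore does not satisfy $\forall x\,(x\in X\iff k(x)\in X)$.

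By $1$-genericity there is $\sigma\prec A$ such that either $\sigma\in W$, or no extension of $\sigma$ lies in $W$. The work is to rule out the second alternative by showing that $W$ is dense. Given any $\tau$, choose $x\in E$ with $x\ge|\tau|$, which is possible since $E$ is infinite. Then $x$ is a fresh coordinate not determined by $\tau$, while $k(x)\neq x$. Extend $\tau$ to a string $\sigma$ of length greater than $\max\{x,k(x)\}$, filling in arbitrary values for the coordinates other than $x$. This fixes $\sigma(k(x))$ whether $k(x)<|\tau|$ or not, and since $k(x)\neq x$ we may then set $\sigma(x):=1-\sigma(k(x))$. The resulting $\sigma$ extends $\tau$ and lies in $W$.

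So no $\sigma\prec A$ has all its extensions outside $W$, and hence $A$ meets $W$. Consequently $A(x)\neq A(k(x))$ for some $x$, so $k$ is not an $m$-autoreduction of $A$. Since $k$ was an arbitrary total computable function that is not eventually the identity, every $m$-autoreduction of $A$ is eventually the identity, and $A$ is $m$-rigid. The only step needing care is the density argument: we must pick $x$ outside the domain of $\tau$, and handle the case that $k(x)$ lies inside it, which the order of choices above does. Everything else is routine.
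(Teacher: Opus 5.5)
Your proof is correct and complete. The paper does not prove this statement at all. It imports it as Jockusch's theorem \cite[Theorem~4.1]{Jockusch1980} and uses it as a black box in Corollary~\ref{cor:1generic-antichain}.

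Your density argument makes that part of the paper self-contained. It is the category analogue of the paper's own proof of Proposition~\ref{prop:mrigid-measure1}, which uses the same fresh-coordinate idea. There, the point $x_{n-1}\in E$ lies above all earlier constraints, so imposing $A(x_{n-1})=A(k(x_{n-1}))$ halves the measure. In your proof, choosing $x\in E$ with $x\ge|\tau|$ lets you force $\sigma(x)\neq\sigma(k(x))$ above any $\tau$.

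Your route also proves slightly more. You only use that $A$ meets the dense computable set $W$, never the ``avoids'' alternative. So the same argument shows that every weakly $1$-generic set (one meeting every dense c.e.\ set of strings) is $m$-rigid. That enlarges the comeager family to which Theorem~\ref{thm:mrigid-antichain} applies.
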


\begin{corollary}
\label{cor:1generic-antichain}
If $A$ is $1$-generic, then $\deg_m(A)$ contains an infinite antichain of $1$-degrees.
\end{corollary}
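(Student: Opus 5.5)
The plan is to combine the two results already established, with no new construction. Let $A$ be $1$-generic. By Theorem~\ref{cor:1generic-mrigid} (Jockusch), $A$ is $m$-rigid. Theorem~\ref{thm:mrigid-antichain} then gives an infinite antichain of $1$-degrees inside $\deg_m(A)$, namely $\{B_{S_i}: i\in\omega\}$ with $S_i=\{2^i(2n+1):n\in\omega\}$. Nothing else has to be checked, because the rigidity trap uses only the $m$-rigidity of $A$. Its one auxiliary input, the existence of some $c\notin A$, comes from Lemma~\ref{lem:mrigid-coinfinite}.

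If I wanted the argument not to rely on the citation, I would reprove directly that a $1$-generic $A$ is $m$-rigid. Take a total computable $k$ that is not eventually the identity, so that $E=\{x:k(x)\neq x\}$ is infinite and computable. Consider the c.e. set of strings
\[
W=\{\sigma : \exists x<|\sigma|\ (x\in E,\ k(x)<|\sigma|,\ \sigma(x)\neq\sigma(k(x)))\}.
\]
Every string $\tau$ has an extension in $W$: choose $x\in E$ with $x\ge|\tau|$, extend $\tau$ past both $x$ and $k(x)$, and set $\sigma(x)\neq\sigma(k(x))$. This is possible because $x$ is a fresh coordinate distinct from $k(x)$. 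So $A$ cannot avoid $W$, and by $1$-genericity some initial segment of $A$ lies in $W$. Hence $A(x)\neq A(k(x))$ for some $x$, and $k$ is not an $m$-autoreduction of $A$.

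The only point needing care is the density step, namely that the diagonalizing coordinate $x$ can be taken beyond $|\tau|$. This is exactly where the infinitude of $E$ is used. Once that is settled, the corollary follows immediately from Theorem~\ref{thm:mrigid-antichain}.
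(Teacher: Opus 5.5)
Your proposal is correct and follows the paper's proof exactly: Theorem~\ref{cor:1generic-mrigid} (Jockusch) gives that $A$ is $m$-rigid, and Theorem~\ref{thm:mrigid-antichain} then yields the antichain $\{B_{S_i}\}_{i\in\omega}$ inside $\deg_m(A)$. Your optional self-contained argument that $1$-generics are $m$-rigid, via the dense computable set of strings $W$, is also sound and could replace the citation; it in fact only uses that $A$ meets every dense c.e.\ set of strings, i.e.\ weak $1$-genericity.
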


\begin{proof}
By Theorem~\ref{cor:1generic-mrigid}, $A$ is $m$-rigid. Now apply Theorem~\ref{thm:mrigid-antichain}.
\end{proof}

\section{Conclusions}
\label{sec:conclusions}

We isolated the notion of $m$-rigidity and proved the following structural principle:
if $A$ is $m$-rigid, then the many-one degree $\deg_m(A)$ contains an infinite antichain of $1$-degrees
(Theorem~\ref{thm:mrigid-antichain}). The duplication construction yields an order-reflecting map
$S\mapsto B_S$ from $(\mathrm{REC},\subseteq^{*})$ into the $\le_1$-structure on a subfamily of $\deg_m(A)$
(Remark~\ref{rem:quasi-embedding}), which is the mechanism behind the antichain.

As consequences, Odifreddi's Question~5 has large families of positive instances in both standard senses of
typicality on Cantor space. By Jockusch's theorem, every $1$-generic set is $m$-rigid, yielding a comeager
family of degrees satisfying the conclusion. Moreover, $m$-rigidity holds with Lebesgue measure $1$
(Proposition~\ref{prop:mrigid-measure1}), so the conclusion holds \emph{almost surely} and in particular for
every Martin-L\"of random real (Corollary~\ref{cor:almost-sure-odifreddi}). Hence any counterexample, if it
exists, must be confined to a null set (and also to a meager set).

A natural direction for further work is to characterize how far $m$-rigidity can be weakened while still
forcing infinite $1$-antichains inside $\deg_m(A)$.

\section*{Acknowledgments}

This work is the result of an extended human–AI collaboration. Several of the main structural ideas and technical arguments emerged from exploratory interaction with AI-based reasoning systems: Gemini Deep Think (Google DeepMind) and ChatGPT Pro (OpenAI). The author has fully reworked and verified all arguments and bears sole responsibility for the correctness of the results.


\begin{thebibliography}{99}

\bibitem{Batyrshin2021}
I.\,I.~Batyrshin,
\emph{On $1$-degrees inside $m$-degrees},
Lobachevskii Journal of Mathematics \textbf{42} (2021), no.~12, 2740--2743.
doi:\,10.1134/S1995080221120076.

\bibitem{Degtev1973}
A.\,N.~Degtev,
\emph{$tt$- and $m$-degrees},
Algebra and Logic \textbf{12} (1973), no.~2, 78--89.
doi:\,10.1007/BF02219290.

\bibitem{Degtev1976}
A.\,N.~Degtev,
\emph{Partially ordered sets of $1$-degrees, contained in recursively enumerable $m$-degrees},
Algebra and Logic \textbf{15} (1976), no.~3, 153--164.
doi:\,10.1007/BF01876317.

\bibitem{DH2010}
R.~G.~Downey and D.~R.~Hirschfeldt,
\emph{Algorithmic Randomness and Complexity},
Theory and Applications of Computability,
Springer, 2010.

\bibitem{Jockusch1980}
C.\,G.~Jockusch~Jr.,
\emph{Degrees of generic sets},
in \emph{Recursion Theory: Its Generalisations and Applications}
(Proc.\ Logic Colloquium, Univ.\ Leeds, Leeds, 1979),
F.\,R.~Drake and S.\,S.~Wainer (eds.),
London Mathematical Society Lecture Note Series, vol.~45,
Cambridge University Press, Cambridge, 1980, pp.~110--139.

\bibitem{Nies2009}
A.~Nies,
\emph{Computability and Randomness},
Oxford Logic Guides, vol.~51,
Oxford University Press, 2009.


\bibitem{Odifreddi1981}
P.\,G.~Odifreddi,
\emph{Strong reducibilities},
Bull.\ Amer.\ Math.\ Soc.\ (N.S.) \textbf{4} (1981), no.~1, 37--86.
doi:\,10.1090/S0273-0979-1981-14863-1.

\bibitem{Odifreddi1999}
P.\,G.~Odifreddi,
\emph{Reducibilities},
in \emph{Handbook of Computability Theory},
E.\,R.~Griffor (ed.),
Studies in Logic and the Foundations of Mathematics, vol.~140,
Elsevier/North-Holland, Amsterdam, 1999, pp.~89--119.
doi:\,10.1016/S0049-237X(99)80019-6.

\bibitem{OdifreddiCRT}
P.~G.~Odifreddi,
\emph{Classical Recursion Theory},
Studies in Logic and the Foundations of Mathematics, vol.~125,
North-Holland, Amsterdam, 1989.

\bibitem{Soare1987}
R.~I.~Soare,
\emph{Recursively Enumerable Sets and Degrees: A Study of Computable Functions and Computably Generated Sets},
Perspectives in Mathematical Logic,
Springer-Verlag, Berlin, 1987.


\end{thebibliography}
\end{document}